\newtheorem{Theorem}{Theorem}[section]
\newtheorem{Proposition}[Theorem]{Proposition}
\newtheorem{Lemma}[Theorem]{Lemma}
\newtheorem{Remark}[Theorem]{Remark}
\newtheorem{Hypothesis}[Theorem]{Hypothesis}
\def\R{\mathbb R}
\def\N{\mathbb N}
\def\E{\mathbb E}
\def\ds{\displaystyle}
\title{\bf An integral inequality  for the   invariant measure  of  some finite dimensional stochastic differential equation}
\author{Giuseppe Da Prato
\\\normalsize Scuola Normale Superiore di Pisa\\\\
Dedicated to Bj\"orn Schmalfuss}
\date{ }
\begin{document}
\maketitle

\begin{abstract}

We prove an integral inequality for the  invariant measure $\nu$  of a stochastic differential equation with additive noise in a finite dimensional space $H=\R^d$. As a consequence,   we show   that there exists  the  Fomin derivative of $\nu$ in any direction $z\in H$ and that it  is  given by $v_z=\langle D\log\rho,z\rangle$,  where $\rho$ is the density of $\nu$ with respect to the Lebesgue measure. Moreover, we prove that $v_z\in L^p(H,\nu)$ for any $p\in[1,\infty)$.  Also we study some properties of the gradient operator in $L^p(H,\nu)$ and of his adjoint.
\end{abstract}

\bigskip

\noindent {\bf 2000 Mathematics Subject Classification AMS}:  60H07,  60H30, 37L40.\medskip

\noindent {\bf Key words}: stochastic differential equations, invariant measure, Fomin derivative,   gradient operator. \bigskip 


\section{Introduction and preliminaries}

   In the  recent paper \cite{DaDe14}  the following  inequality   involving the invariant measure $\nu$  of the Burgers  equation    was proved
 \begin{equation}
\label{e1}
\left|\int_H \langle    RD\varphi,z\rangle\,d\nu   \right|\le C_p\|\varphi\|_{L^p(H,\nu)}\,|z|,
\end{equation}
 for all $\varphi\in C^1_b(H)$, all $z\in H$ and all $p>1$, $R$ being  a suitable negative power of the Laplace operator equipped with Dirichlet boundary conditions.

As noted in \cite{DaDe14}, by estimate \eqref{e1} it follows that  $RD$ is closable in $L^p(H,\nu)$ for all $p> 1$. Moreover, for each $z\in H$ there exists $v_z\in L^p(H,\nu)$ such that
\begin{equation}
\label{e2}
\int_H \langle    RD\varphi,z\rangle\,d\nu   =\int_H v_z\, \varphi\,d\nu,\quad \forall\;\varphi\in C^1_b(H).
\end{equation}
 Identity \eqref{e2}  implies   that $\nu$ is Fomin differentiable in all directions of the range of $R(H)$ of $R$.  We recall that if $\nu=N_Q$  (the Gaussian measure of mean  $0$ and covariance $Q$)  identity \eqref{e2} is well known in Malliavin Calculus. In this case the adjoint $(Q^{1/2}D)^*$ of $Q^{1/2}D$ is  called the Skorhood operator.\medskip

The aim of the present paper is to show that the inequality  \eqref{e1}, with $R$ replaced by the identity operator,  can also  be proved  for the invariant measures of  some stochastic differential equations in $H=\R^d$ of the form
 \begin{equation}
\label{e1.1}
\left\{\begin{array}{l}
dX(t)= b(X(t))dt+dW(t),\\\\
X(0)=x\in H,
\end{array}\right. 
\end{equation}
where $W$ is an $\R^d$--valued standard Brownian motion and $b$ fulfills the following assumptions.
 \begin{Hypothesis}
\label{h1}

(i) There exist  $\omega>0$,  $a\ge 0$ such that 
\begin{equation}
\label{e1.2}
\langle b(x),x   \rangle\le -\omega|x|^2+a,\quad \forall\;x\in \R^d,
\end{equation}

(ii) $b:H\to H$ is  continuously differentiable and  there exists  $K>0$, $N\in\N$ such that
\begin{equation}
\label{e1.3}
|b(x)|+\|b'(x)\|\le K(1+|x|^{2N}) , \quad\forall\;  
x\in \R^d.
\end{equation} 
 
 \end{Hypothesis}
\noindent  By   (ii)  it follows that $b$ is Lipschitz continuous on bounded sets of $H$, whereas (i) allows to estimate $|X(t,x)|^2$ by It\^o' formula; therefore  existence and uniqueness  of a strong solution $X(\cdot,x)$ of \eqref{e1.1} is classical, see e.g. the monograph \cite{Kr95}. 
 We shall denote by $P_t$ the transition semigroup
\begin{equation}
\label{e2.3a}
P_t\varphi(x)=\E[\varphi(X(t,x))],\quad t\ge 0,\; x\in H,\;\varphi\in B_b(H)
\end{equation}
For proving   \eqref{e1} we argue as in \cite{DaDe14} starting  from the elementary  identity, see \eqref{e40}
$$
  P_t(\langle D\varphi,h\rangle)= \langle DP_t\varphi, h\rangle  -   \int_0^tP_{t-s}(\langle Db\cdot h   ,D P_s\varphi \rangle)ds.
  $$
Then we prove  suitable  estimates for $DP_t\varphi$ and their integrals with respect to $\nu$. These estimates require some work  because, due to the polynomial growth  of the derivative of $b$, see \eqref{e1.3},  we cannot exploit the classical  Bismut--Elworthy--Li formula, see \cite{El92}. To overcome this problem we shall argue as in \cite{DaDe03}, \cite{DaDe07}  and  \cite{DaDe14}, introducing a suitable   potential (in the present case $ V(x)=K(1+|x|^{2N})$)
  and the Feynman--Kac semigroup
 \begin{equation}
\label{e1.10}
S_t\varphi(x)=\E[\varphi(X(t,x))\,e^{-\int_0^tV(X(s,x))\,ds}].
\end{equation}
We shall first  estimate $ \langle DS_t\varphi(x),h   \rangle$   then  $ \langle DP_t\varphi(x),h   \rangle$, by  taking  advantage of the identity
\begin{equation}
\label{e1.12}
P_t\varphi=S_t\varphi+\int_0^t S_{t-s}(VP_s\varphi)\,ds,
\end{equation}
which follows from the variation of constants formula,  see Section 2 below.\medskip

 In Section 3 we  prove  that inequality \eqref{e1}   and  identity \eqref{e2}   hold with $R=I$. 
  Moreover,  for any $z\in H$ we show that  the Fomin derivative $v_z$ in the direction $z\in H$ is given by $\langle D\log\rho,z   \rangle$, where $\rho$  is the  density  of $\nu$ with respect to  the Lebesgue measure. Moreover   $v_z\in L^p(H,\nu)$ for all  $p\in[1,\infty) $.   Finally, we prove a formula for  the adjoint $D^*$ of $D$ and also for the elliptic operator $-\tfrac12\,D^*D$ which can be seen as a generalisation of the Ornstein--Uhlenbeck operator.\medskip

   \bigskip
   
   We end this section with some notations.  We set $H=\R^d$, $d\ge 1$ (norm $|\cdot|$, inner product $\langle\cdot,\cdot \rangle$) and denote by  $L(H)$   the space of all linear bounded operators from $H$ into $H$. Moreover, $C_b(H)$   is  the space of all real continuous   and bounded mappings   $\varphi\colon H\to \R$ endowed with the sup norm 
  $$\|\varphi \|_{\infty}=\sup_{x\in  H}\,|\varphi(x)|$$
  whereas $C^k_b(H)$, $k>1,$ is  the space of all real functions which are continuous and bounded together with their derivatives of order lesser than $k$.
      Finally,   $ B_b(H)$ will represent the space of  all  real,  bounded   and Borel mappings on $H$.\medskip

\section{Estimates of the  derivative of the transition semigroup}

Let us start by giving an estimate of $\E(|X(t,x)|^{2m})$, $m\in\N$. The following lemma is standard, we shall give some details of the proof for the reader's convenience.
  \begin{Lemma}
\label{l2.1}
Assume Hypothesis \ref{h1}(i). Then
for any  $m\in\N$ there exists $a_m>0$ such that
\begin{equation}
\label{e2.1}
\E[|X(t,x)|^{2m}]\le e^{-2m\omega t}|x|^{2m}+a_m,\quad\forall\;x\in H,\;t\ge 0.
\end{equation}
\end{Lemma}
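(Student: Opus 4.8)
\medskip
\noindent\emph{Proof sketch (plan).}
The plan is the classical moment bound obtained from It\^o's formula applied to $|X(t,x)|^{2m}$, the dissipativity \eqref{e1.2} being exactly what produces the exponential factor; the only point requiring a little care is a localization that turns a stochastic integral into a true martingale.

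Fix $m\in\N$ and $x\in H$, and put $\phi(y):=|y|^{2m}$, so that $D\phi(y)=2m\,|y|^{2m-2}\,y$ and $\mathrm{Tr}\,[D^2\phi(y)]=2m(d+2m-2)\,|y|^{2m-2}$. Since it is not yet known that $|X(t,x)|^{2m}$ is integrable, I would first apply It\^o's formula to $\phi(X(\cdot,x))$ on $[0,t\wedge\tau_R]$, where $\tau_R:=\inf\{s\ge0:|X(s,x)|\ge R\}$. On that interval the martingale part $\int_0^{s}2m|X(r,x)|^{2m-2}\langle X(r,x),dW(r)\rangle$, $s\in[0,t\wedge\tau_R]$, has bounded integrand, hence is a true martingale with zero mean, and taking expectations gives, with $u_m^R(t):=\E[|X(t\wedge\tau_R,x)|^{2m}]$,
\begin{equation*}
u_m^R(t)=|x|^{2m}+\E\!\int_0^{t\wedge\tau_R}\!\Big(2m|X(r,x)|^{2m-2}\langle b(X(r,x)),X(r,x)\rangle+m(d+2m-2)|X(r,x)|^{2m-2}\Big)\,dr.
\end{equation*}

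Next I would insert \eqref{e1.2}: since $2m|y|^{2m-2}\langle b(y),y\rangle\le-2m\omega|y|^{2m}+2ma|y|^{2m-2}$, the integrand is dominated by $-2m\omega\,|y|^{2m}+c_m\,|y|^{2m-2}$ with $c_m:=m(2a+d+2m-2)$. Letting $R\to\infty$ — the left-hand side tends to $u_m(t):=\E[|X(t,x)|^{2m}]$, which is finite by the well-posedness recalled after Hypothesis \ref{h1}, and the right-hand side is controlled by monotone convergence/Fatou — one is left with the linear differential inequality
\begin{equation*}
\frac{d}{dt}u_m(t)\le-2m\omega\,u_m(t)+c_m\,u_{m-1}(t),\qquad u_0\equiv1 .
\end{equation*}

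Finally I would close by induction on $m$, using the variation-of-constants form $u_m(t)\le e^{-2m\omega t}|x|^{2m}+c_m\int_0^t e^{-2m\omega(t-r)}u_{m-1}(r)\,dr$. For $m=1$ this already yields \eqref{e2.1} with $a_1=c_1/(2\omega)$. For $m\ge2$, substituting the inductive bound $u_{m-1}(r)\le e^{-2(m-1)\omega r}|x|^{2(m-1)}+a_{m-1}$ and evaluating the two elementary integrals produces $e^{-2m\omega t}|x|^{2m}$ plus terms which are lower-order powers of $|x|$ that decay in $t$; absorbing these by Young's inequality gives \eqref{e2.1}. I do not foresee any genuine difficulty here — the statement is standard — the only steps deserving attention being the stopping-time argument that legitimizes taking expectations and the passage $R\to\infty$.
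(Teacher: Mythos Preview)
Your proposal is correct and follows essentially the same route as the paper: apply It\^o's formula to $|X(t,x)|^{2m}$, use \eqref{e1.2} to obtain the linear differential inequality $u_m'\le-2m\omega u_m+c_m u_{m-1}$ with the very same constant $c_m=m(2a+d+2m-2)$, pass to the integral (variation-of-constants) form, and close by induction on $m$. You are in fact more careful than the paper in one respect: you localize with $\tau_R$ to justify taking expectations, a step the paper simply omits. One small caveat worth flagging: after substituting the inductive bound you obtain a cross term of size $\frac{c_m}{2\omega}|x|^{2(m-1)}e^{-2(m-1)\omega t}$, and Young's inequality on $(|x|e^{-\omega t})^{2(m-1)}$ yields $(1+\varepsilon)e^{-2m\omega t}|x|^{2m}+C_\varepsilon$ rather than coefficient exactly~$1$; the paper's ``follows easily by recurrence'' hides the same point, and the slightly weaker bound suffices for every later use (notably \eqref{e2.3}).
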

\begin{proof}
Let first  consider the case $m=1$. Then by It\^o's formula, taking into account \eqref{e1.2} we find
$$
\begin{array}{lll}
\ds\frac{d}{dt}\;\E[|X(t,x)|^{2}]&=&2\E[ \langle X(t,x), b(X(t,x))   \rangle]+d\\
\\
&\le& -2\omega\E[|X(t,x)|^{2}]+  2a+d.
\end{array}
$$
We deduce that
$$
\frac{d}{dt}\;\E\left[|X(t,x)|^{2}\right]\le -2\omega\E\left[|X(t,x)|^{2}\right]+2a +d.
$$
By a standard comparison result it follows that
\begin{equation}
\label{e2.2}
\E\left[|X(t,x)|^{2}\right]\le e^{-2\omega t}|x|^{2}+a_2,\quad\forall\;x\in H,\;t\ge 0,
\end{equation}
where
$$
a_2=\frac{1}{\omega}\,(2a+d).
$$
Now let $m>1$ and $\varphi_m(x)=|x|^{2m}$. Then we have
$$
D\varphi_m(x)=2m|x|^{2m-2}\,x
$$
and
$$
D^2\varphi_m(x)=4m(m-1)|x|^{2m-4}\,x\otimes x+2m|x|^{2m-2}\,I,
$$
where $I$ represents identity in $H$.
Consequently
$$
\frac12\;\mbox{\rm Tr}\;[D^2\varphi_m(x)]=m(2m-2+d)|x|^{2m-2}
$$
Then again by It\^o's formula we have
$$
\begin{array}{l}
\ds\frac{d}{dt}\;\E\left[|X(t,x)|^{2m}\right]=2m\E\left[X(t,x)|^{2m-2}\right] \langle X(t,x), b(X(t,x))   \rangle]\\
\\
\hspace{30mm}+m(2m-2+d)\E[X(t,x)|^{2m-2}]\\
\\
\ds \le -2m\omega\E\left[|X(t,x)|^{2m}\right] +m(2a+2m-2+d)\E\left[X(t,x)|^{2m-2}\right].
\end{array} 
$$
It
 follows that   
 $$
 \begin{array}{lll}
 \E[|X(t,x)|^{2m}]&\le&  e^{-2m\omega t}|x|^{2m}\\
 \\
 &&\ds+ m(2a+2m-2+d)\int_0^te^{-2m\omega (t-s)}\E[X(s,x)|^{2m-2}]ds.
 \end{array}
 $$
The conclusion follows easily by recurrence.
\end{proof}

Now we are going to  prove an estimate for the derivative $D_xX(t,x)h,$ which we denote by $\eta^h(t,x),\,h\in H.$ As  well known $\eta^h(t,x)$ is a solution to the random equation
\begin{equation}
\label{e1.8}
\left\{\begin{array}{l}
\ds \frac{d}{dt}\,\eta^h(t,x)=b'(X(t,x)\cdot\eta^h(t,x),\\
\\
\eta^h(0,x)=h
\end{array}\right. 
\end{equation}
 \begin{Lemma}
\label{l1.2}
Assume Hypothesis \ref{h1}. Then the  following estimate holds
\begin{equation}
\label{e1.9}
|\eta^h(t,x)|\le e^{K\int_0^t(1+|X(s,x)|^{2N})ds}\,|h|,\quad t\ge 0,\;x,h\in H.
\end{equation}
\end{Lemma}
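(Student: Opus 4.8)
The plan is to fix a sample path and treat \eqref{e1.8} as a deterministic linear ordinary differential equation, and then to apply Gronwall's inequality. For fixed $x,h\in H$ the map $t\mapsto b'(X(t,x))$ is continuous, since $t\mapsto X(t,x)$ is continuous and $b'$ is continuous by Hypothesis \ref{h1}(ii); hence \eqref{e1.8} has a unique global solution $\eta^h(\cdot,x)$ of class $C^1$, and it satisfies the integral identity
\[
\eta^h(t,x)=h+\int_0^t b'(X(s,x))\cdot\eta^h(s,x)\,ds.
\]

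Next I would take norms and use the polynomial bound \eqref{e1.3} on $\|b'\|$, which gives
\[
|\eta^h(t,x)|\le|h|+\int_0^t\|b'(X(s,x))\|\,|\eta^h(s,x)|\,ds\le|h|+K\int_0^t\big(1+|X(s,x)|^{2N}\big)\,|\eta^h(s,x)|\,ds.
\]
Since $s\mapsto K(1+|X(s,x)|^{2N})$ is continuous, hence locally integrable, Gronwall's inequality applies with the constant data $|h|$ and yields exactly \eqref{e1.9}. Equivalently, one may differentiate $|\eta^h(t,x)|^2$, use $\langle\eta,b'(X)\eta\rangle\le\|b'(X)\|\,|\eta|^2$ together with \eqref{e1.3} to obtain $\frac{d}{dt}|\eta^h(t,x)|^2\le 2K(1+|X(t,x)|^{2N})|\eta^h(t,x)|^2$, and then integrate; this variant has the mild advantage of avoiding any non-differentiability of $t\mapsto|\eta^h(t,x)|$ at zeros of $\eta^h$, although at such a zero uniqueness for \eqref{e1.8} forces $\eta^h(\cdot,x)\equiv 0$ and \eqref{e1.9} is then trivial.

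This estimate is entirely elementary and pathwise, so I do not expect any substantial obstacle; the only points worth checking carefully are the continuity (hence local integrability) of the coefficient $t\mapsto b'(X(t,x))$, which is guaranteed by Hypothesis \ref{h1}(ii), and the fact that \eqref{e1.9} holds for every sample path with one and the same integrand $K(1+|X(s,x)|^{2N})$, which is precisely what will make it usable later in combination with the moment bounds of Lemma \ref{l2.1}.
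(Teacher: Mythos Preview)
Your proof is correct and essentially identical to the paper's: the paper differentiates $|\eta^h(t,x)|^2$, uses $\langle b'(X)\eta,\eta\rangle\le K(1+|X|^{2N})|\eta|^2$ from \eqref{e1.3}, and applies Gronwall, which is exactly the ``equivalent'' variant you spell out in your second paragraph.
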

\begin{proof}
By \eqref{e1.8} we deduce, taking into account
\eqref{e1.3}, that
$$
\frac12\,\frac{d}{dt}\,|\eta^h(t,x)|^2= \langle   b'(X(t,x)\cdot\eta^h(t,x),\eta^h(t,x)\rangle\le K(1+|X(t,x)|^{2N})\,|\eta^h(t,x)|^2.
$$
So, the conclusion follows from Gronwall's lemma.

\end{proof}

Now we  are going to estimate of $D_xP_t\varphi$. 

\subsection{Pointwise estimate}

As we said in the introduction,   we cannot  estimate  $D_xP_t\varphi$ for $\varphi\in C_b(H)$ using the   Bismut--Elworthy--Li  formula see \cite{El92},  because we do not know  whether the expectation on the right hand side of \eqref{e1.9} does exist.
  For this reason, we introduce the    potential
  $$
  V(x)=K(1+|x|^{2N}),\quad x\in H
  $$
  and the Feynman--Kac semigroup
$$
S_t\varphi(x)=\E[\varphi(X(t,x))\,e^{-\int_0^tV(X(s,x))\,ds}].
$$
We recall that the Bismut--Elworthy--Li formula generalises to $S_t$, see \cite{DaZa97}. In fact for all $\varphi\in C_b(H)$, setting
$$
 \beta(t)=\int_0^t V(X(s,x))ds,
 $$ the following identity holds
\begin{equation}
\label{e1.11}
\begin{array}{l}
\ds  \langle DS_t\varphi(x),h   \rangle=\frac1t\,\E \left[ \varphi(X(t,x))\,e^{-\beta(t)}\,\int_0^t  \langle\eta^h(s,x),dW(s)   \rangle \right]\\
\\
\ds-\E \left[ \varphi(X(t,x))\,e^{-\beta(t)}\int_0^t\left(1-\frac{s}{t}   \right)\,\langle    V'(X(s,x),\eta^h(s,x)  \rangle \right]\,ds\\
\\
=: I_1(\varphi,x,h,t)+I_2(\varphi,x,h,t)=I_1+I_2.
\end{array} 
\end{equation}
We shall first  estimate $ \langle DS_t\varphi(x),h   \rangle$,   then $ \langle DP_t\varphi(x),h   \rangle$. In the latter case, we take  advantage of the identity
$$
P_t\varphi=S_t\varphi+\int_0^t S_{t-s}(VP_s\varphi)\,ds,
$$
which follows from the variation of constants formula;  in  fact, denoting by $\mathcal L$ and
$\mathcal K$ the infinitesimal generators of $P_t$ and $S_t$ respectively, it holds
$$
\mathcal L=\mathcal K+V.
$$

  \begin{Lemma}
\label{l1.3}
Let  $\varphi\in  C_b(H)$, $t\ge 0$,  $x\in H$. Then for   $p>1$, there exists  a constant $C_p>0$ such that
\begin{equation}
\label{e13}
 |D_xS_t\varphi(x)| \le C_p(1+t^{-1/2})(1+|x|^{2N-1}) \left[\E\left( \varphi^p(X(t,x)\right)\right]^{1/p}.
\end{equation}
\end{Lemma}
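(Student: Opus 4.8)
\medskip
\noindent\textbf{Proof proposal.} The plan is to start from the Bismut--Elworthy--Li type identity \eqref{e1.11}, which writes $\langle DS_t\varphi(x),h\rangle$ as $I_1+I_2$, and to estimate $I_1$ and $I_2$ separately by H\"older's inequality on the probability space with exponents $p$ and $q:=p/(p-1)$, so that the factor $[\E(\varphi^p(X(t,x)))]^{1/p}$ is produced. The observation that makes everything go through is that the Feynman--Kac weight $e^{-\beta(t)}$ exactly compensates the Jacobian bound of Lemma \ref{l1.2}: since $V\ge 0$, the function $s\mapsto\beta(s)$ is nondecreasing, and because $V(x)=K(1+|x|^{2N})$ the estimate \eqref{e1.9} reads $|\eta^h(s,x)|\le e^{\beta(s)}|h|$; hence, using also $V\ge K$,
$$
e^{-\beta(t)}\,|\eta^h(s,x)|\le e^{-(\beta(t)-\beta(s))}\,|h|\le e^{-K(t-s)}\,|h|,\qquad 0\le s\le t,
$$
and in particular $e^{-\beta(s)}|\eta^h(s,x)|\le|h|$ for every $s\ge0$.

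For $I_2$ I would bound $|V'(x)|\le 2NK|x|^{2N-1}$, use $0\le 1-s/t\le1$ and the inequality above, and then H\"older in $\omega$, to get
$$
|I_2|\le 2NK\,|h|\,\big[\E(\varphi^p(X(t,x)))\big]^{1/p}\left(\E\left[\left(\int_0^t|X(s,x)|^{2N-1}\,e^{-K(t-s)}\,ds\right)^{\!q}\,\right]\right)^{1/q}.
$$
Jensen's inequality with respect to the weight $e^{-K(t-s)}\,ds$ gives $\big(\int_0^t f_s\,e^{-K(t-s)}ds\big)^q\le K^{-(q-1)}\int_0^t f_s^q\,e^{-K(t-s)}ds$; combining this with the moment bound $\E[|X(s,x)|^{(2N-1)q}]\le C(1+|x|^{(2N-1)q})$ supplied by Lemma \ref{l2.1} and with $\int_0^t e^{-K(t-s)}ds\le 1/K$ then yields $|I_2|\le C_p(1+|x|^{2N-1})\,[\E(\varphi^p(X(t,x)))]^{1/p}\,|h|$, uniformly in $t\ge0$.

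For $I_1$, after H\"older it remains to estimate $\E\big[\big(e^{-\beta(t)}\,|\int_0^t\langle\eta^h(s,x),dW(s)\rangle|\big)^q\big]$. One should not detach $e^{-\beta(t)}$ from the stochastic integral and apply the Burkholder--Davis--Gundy inequality, since $\E\,e^{q\beta(t)}$ need not be finite; instead I would keep the weight inside and set $\xi_s:=e^{-\beta(s)}\int_0^s\langle\eta^h(r,x),dW(r)\rangle$, which by It\^o's product rule solves the linear scalar equation
$$
d\xi_s=-V(X(s,x))\,\xi_s\,ds+e^{-\beta(s)}\langle\eta^h(s,x),dW(s)\rangle,\qquad\xi_0=0.
$$
Applying It\^o's formula to $|\xi_s|^q$ (for $q\ge2$; the case $q<2$ then follows by Jensen from the case $q=2$) and, after a standard localisation, discarding the nonpositive drift $-q\,V(X(s,x))\,|\xi_s|^q$ and using $e^{-\beta(s)}|\eta^h(s,x)|\le|h|$, one gets $\frac{d}{ds}\,\E|\xi_s|^q\le c_q\,|h|^2\,\E|\xi_s|^{q-2}$, whence $\E|\xi_t|^q\le C_q\,t^{q/2}\,|h|^q$ by a Gronwall argument. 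Dividing by $t$ gives $|I_1|\le C_p\,t^{-1/2}\,[\E(\varphi^p(X(t,x)))]^{1/p}\,|h|$. Adding the two bounds and taking the supremum over $|h|=1$ produces \eqref{e13}.

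I expect the only genuine difficulty to be this control of the weighted stochastic integral in $I_1$: the naive Burkholder--Davis--Gundy route fails precisely because $\beta(t)$ has no exponential moments, and the whole reason for passing to the Feynman--Kac semigroup is that, once $e^{-\beta(t)}$ is kept attached to $\eta^h$, the auxiliary process $\xi_s$ obeys a linear equation with a dissipative drift and a bounded diffusion coefficient, so all its $L^q$ norms are elementary to bound by $t^{q/2}|h|^q$. The estimate for $I_2$ is routine once one notices the exponential decay $e^{-K(t-s)}$ coming from $V\ge K$, and it uses only the polynomial moment bounds of Lemma \ref{l2.1}.
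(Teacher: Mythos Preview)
Your proof is correct and follows essentially the same route as the paper: the same Bismut--Elworthy--Li decomposition $I_1+I_2$, the same auxiliary process $\xi_s=z(s)=e^{-\beta(s)}\int_0^s\langle\eta^h,dW\rangle$, It\^o's formula applied to $|\xi_s|^q$ with the dissipative drift discarded, and the same key cancellation $e^{-\beta(s)}|\eta^h(s,x)|\le|h|$. The only differences are cosmetic: for $I_1$ the paper takes the running supremum and closes via Burkholder plus two applications of Young's inequality, whereas you take expectation directly and integrate the Bernoulli-type differential inequality (which is slightly more economical and avoids the sup); for $I_2$ the paper uses only $e^{-\beta(s)}|\eta^h|\le|h|$ and ends up with a factor $t$, while you exploit the sharper bound $e^{-\beta(t)}|\eta^h(s,x)|\le e^{-K(t-s)}|h|$ to get a constant uniform in $t$, which actually matches the stated $(1+t^{-1/2})$ more cleanly.
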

\begin{proof} 
We start by estimating $ I_1.$
By  H\"older's inequality with  exponents $p,q=\tfrac{p}{p-1}$    we have
\begin{equation}
\label{e14e}
\begin{array}{l}
|I_1|\le  \ds\frac1t\left[\E\left( \varphi^p(X(t,x)\right)\right]^{1/p} \left[\E\left(e^{-q \beta(t)}\left|\int_0^t \langle   \eta^h(s,x),dW(s)\rangle\right|^q    \right)   \right]^{1/q}\\
\\
\hspace{6mm}=\ds:\frac1t\left[\E\left( \varphi^p(X(t,x)\right)\right]^{1/p} \left[\E\left(|z(t)|^q  \right)   \right]^{1/q},
\end{array} 
\end{equation}
where
\begin{equation}
\label{e15e}
z(t)=e^{- \beta(t)}\int_0^t \langle  \eta^h(s,x),dW(s)\rangle,\quad t\ge 0.
\end{equation}
We now apply It\^o's formula to $g(z(t))$ where
$g(r)=|r|^q,\;r\in\R.$ Since
$$
 g'(r)=q|r|^{q-2}r,\quad g''(r)=q(q-1)|r|^{q-2},
$$
and
$$
\begin{array}{lll}
dz(t)&=&\ds-\beta'(t)e^{- \beta(t)}\int_0^t \langle  \eta^h(s,x),dW(s)\rangle\,ds +e^{- \beta(t)}  \langle   \eta^h(t,x),dW(t)\rangle\\
\\
&=&\ds-\beta'(t)z(t)+e^{- \beta(t)}  \langle  \eta^h(t,x),dW(t)\rangle,
\end{array}
$$ 
we find
$$
\begin{array}{lll}
\ds d|z(t)|^q&=&q|z(t)|^{q-2}z(t)(-\beta'(t)z(t)+e^{- \beta(t)}  \langle   \eta^h(t,x),dW(t)\rangle)\\
\\
&&\ds+\frac12\,q(q-1)|z(t)|^{q-2}e^{-2\beta(t)}|  \eta^h(t,x)|^2dt.
\end{array} 
$$
Integrating from $0$ to $t$, yields
\begin{equation}
\label{e16f}
\begin{array}{lll}
 |z(t)|^q&=&\ds-q\int_0^t|z(s)|^{q} \beta'(s)\,ds
\\
\\
&&\ds+q\int_0^t|z(s)|^{q-2}z(s)  e^{- \beta(s)}  \langle   \eta^h(s,x),dW(s)\rangle\\
\\
&&\ds+\frac12\,q(q-1)\int_0^te^{-2\beta(s)}|z(s)|^{q-2}|  \eta^h(s,x)|^2ds.
\end{array} 
\end{equation}
Neglecting the negative first term in the previous identity  and taking expectation, we find 
\begin{equation}
\label{e16a}
\begin{array}{l}
\ds\E\left(\sup_{r\in [0,t]} |z(r)|^q    \right)\\
\\
\ds \le  q\,\E\left(\sup_{r\in[0,t]} \left|\int_0^r e^{- \beta(s)}|z(s)|^{q-2} z(s) \langle  \eta^h(s,x),dW(s)\rangle\right|\right)\\\\
\ds + \frac12q(q-1)\E\left(\int_0^t e^{-2 \beta(s)}|z(s)|^{q-2} |  \eta^h(s,x)|^2ds
\right)\\\\
=:A_1+A_2.
\end{array}
\end{equation}
By  the Burkholder inequality     we have, taking into account  Lemma \ref{l2.1}
\begin{equation}
\label{e16}
\begin{array}{l}
\ds A_1 \le \ds 3q \E\left[\left|\int_0^t e^{-2\beta(s)}|z(s)|^{2(q-1)} |  \eta^h(s,x)|^2ds\right|^{1/2}\right]\\
\ds \le 3q \E\left[\sup_{r\in [0,t]} |z(r)|^{q-1}\left(\int_0^t e^{-2 \beta(s)} | \eta^h(s,x)|^2ds\right)^{1/2}\right]\\
\\
\ds\le 3q t^{1/2}\,\E\left[\sup_{r\in [0,t]} |z(r)|^{q-1}\right]\,|h|.
\end{array} 
\end{equation}
By H\"older's inequality with exponents $q,\,\tfrac{q}{q-1},$ it follows that
\begin{equation}
\label{e23}
\begin{array}{l}
\ds A_1 \le   3q\,t^{1/2} |h|  \left[\E\left(\sup_{r\in [0,t]} |z(r)|^{q} \right)\right]^{\frac{q-1}{q}}.
\end{array} 
\end{equation}
Now by  the Young inequality 
\begin{equation}
\label{e23g}
ab\le \frac1{u}\,a^u+   \frac1{v}\,a^v,\quad a>0,\;b>0,\; \frac1{u}+ \frac1{v}=1
\end{equation}
with $u=q,\;v=\frac{q-1}{q}$,
there exists $c_1>0$ such that
\begin{equation}
\label{e24}
A_1\le \frac14\;\E\left(\sup_{r\in [0,t]} |z(r)|^{q} \right)+c_1\,t^{q/2} \,|h|^{q}.
\end{equation}

Concerning  $A_2$, using again Lemma \ref{l2.1}, we find
$$
\begin{array}{l}
\ds A_2= \frac12q(q-1)\E\left(\int_0^t e^{-2 \beta(s)}|z(s)|^{q-2} |\eta^h(s,x)|^2ds\right)\\
\\
\ds\le \frac12q(q-1)\;\E\left[\left(\sup_{r\in[0,t]} |z(r)|^{q-2}\right)\int_0^t e^{-2 \beta(s)}  |\eta^h(s,x)|^2ds  \right]\\
\\
\ds \le  \frac12q(q-1)\;\E\left[\left(\sup_{r\in[0,t]} |z(r)|^{q-2}\right)  \right]\,|h|^2\,t.
\end{array}
$$
By H\"older's inequality with exponents  
$\tfrac{q}2,\,\tfrac{q}{q-2}$ we have
$$
A_2\le \frac12\;q(q-1)\; |h|^2 \,\left[\E\left(\sup_{r\in[0,t]} |z(r)|^{q}\right)\right]^{\frac{q-2}{q}}.
$$
By the Young inequality \eqref{e23g} with $u=\frac{q}2$ and $v=\frac{q}{q-2}$, it follows that there exists $c_2>0$
 such that
\begin{equation}
\label{e26}
A_2\le \frac14\;\E\left(\sup_{r\in [0,t]} |z(r)|^{q} \right)+c_2 |h|^{q} \,t^{q/2}.
\end{equation}

Taking into account \eqref{e16a}, \eqref{e24} and \eqref{e26} we conclude that 
$$
\E\left(\sup_{r\in [0,t]} |z(r)|^{q} \right)\le  \frac12\;\E\left(\sup_{r\in [0,t]} |z(r)|^{q} \right)+(c_1+c_2) \,|h|^{q}\;t^{q/2}.
$$
Therefore
\begin{equation}
\label{e27}
\E\left(\sup_{r\in [0,t]} |z(r)|^{q} \right)\le  (c_1+c_2). \,|h|^{q}\;t^{q/2}.
\end{equation}
Finally, by \eqref{e14e} it follows that
\begin{equation}
\label{e28}
I_1\le (c_1+c_2) t^{-1/2} \,|h| \,\left[\E\left( \varphi^p(X(t,x)\right)\right]^{1/p}.
\end{equation}\medskip

Now let us consider $I_2$,
and write
\begin{equation}
\label{e29}
I_2
\le 2KN \left[\E\left[ \varphi^p(X(t,x)\right)\right]^{1/p}(\Lambda(t) )^{1/q},
\end{equation}
where $\frac1p+\frac1q=1$ and
\begin{equation}
\label{e30}
\begin{array}{l}
\ds\Lambda(t)=\E\left[e^{-q\beta(t)}\left(\int_0^t|X(s,x)|^{2N-1}  \,|\eta^h(s,x)|\,ds\right)^q \right]\\
\\
\ds\le\E\left[\left(\int_0^te^{-\beta(s)}|X(s,x)|^{2N-1}  \,|\eta^h(s,x)|\,ds\right)^q \right]  \\
\\
\ds\le \E\left[\sup_{r\in[0,t]}\left(|X(r,x)|^{(2N-1)q}\right) \left(\int_0^te^{-\beta(s)}\,|\eta^h(s,x)|\,ds\right)^q \right]\\
\\
\ds\le \E\left[\sup_{r\in[0,t]}\left(|X(r,x)|^{(2N-1)q}\right) \right]\,|h|^q\,t^q .
\end{array}
\end{equation} 
So
\begin{equation}
\label{e31}
I_2
\le 2KN\left[\E\left[ \varphi^p(X(t,x)\right)\right]^{1/p}  \left(\E\left[\sup_{r\in[0,t]}\left(|X(r,x)|^{(2N-1)q}\right) \right]\right)^{1/q} |h|\,t.
\end{equation}
Recalling finally \eqref{e2.1}  we  see that there exists $c_3>0$ such that
$$
\left(\E\left[\sup_{r\in[0,t]}\left(|X(r,x)|^{2N-1}\right) \right]\right)^q\le c_3(1+ |x|^{2N-1}),
$$
so that
\begin{equation}
\label{e32}
I_2
\le 2KNc_3(1+ |x|^{2N-1}) \left[\E\left[ \varphi^p(X(t,x)\right)\right]^{1/p}  \,|h| \,t.
\end{equation}
Finally, by \eqref{e1.11}, \eqref{e28} and \eqref{e32}, the conclusion follows easily.
\end{proof}
 
 \subsection{The invariant measure $\nu$}
  
We shall denote  by $\pi_{t,x}$ the law of $X(t,x)$ so that for each $\varphi\in B_b(H)$ we have
 \begin{equation}
\label{e2.4a}
P_t\varphi(x)=\int_H\varphi(y)\pi_{t,x}(dy),\quad x\in H,\;t>0.
\end{equation}

\begin{Lemma}
\label{l2.2}
Assume Hypothesis \ref{h1}(i). Then there is an  invariant measure $\nu$ of $P_t$, moreover for all $m\in\N$ we have
\begin{equation}
\label{e2.3}
\int_H|x|^{2m}\,\nu(dx)\le a_m,
\end{equation}
where $a_m$ is the constant in \eqref{e2.1}.
 \end{Lemma}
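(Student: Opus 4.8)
The plan is to construct $\nu$ by the Krylov--Bogoliubov averaging procedure, the essential ingredient being the moment estimate of Lemma~\ref{l2.1}, and then to deduce \eqref{e2.3} by passing to the limit in it. First I would fix the initial datum $x=0$ and introduce, for $T>0$, the averaged probability measures $\mu_T(dy)=\frac1T\int_0^T\pi_{t,0}(dy)\,dt$. Choosing $x=0$ in \eqref{e2.1} makes the first term on its right-hand side vanish, so $\E[|X(t,0)|^{2m}]\le a_m$ for every $t\ge0$ and every $m\in\N$; integrating in $t$ and using \eqref{e2.4a} gives
$$
\int_H|y|^{2m}\,\mu_T(dy)\le a_m,\qquad\forall\;T>0,\;m\in\N .
$$
Taking $m=1$ and applying Chebyshev's inequality, $\mu_T(\{|y|>R\})\le a_1/R^2$ uniformly in $T$; since $H=\R^d$ is finite dimensional its closed balls are compact, hence $\{\mu_T\}_{T>0}$ is tight.

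By Prokhorov's theorem there are a sequence $T_n\uparrow\infty$ and a probability measure $\nu$ on $H$ with $\mu_{T_n}\to\nu$ weakly. The semigroup $P_t$ is Feller, that is $P_s\varphi\in C_b(H)$ whenever $\varphi\in C_b(H)$, because $x\mapsto X(s,x)$ depends continuously on the initial datum ($b$ being locally Lipschitz by \eqref{e1.3}). Hence, for $\varphi\in C_b(H)$ and $s\ge0$,
$$
\int_H P_s\varphi\,d\nu=\lim_n\int_H P_s\varphi\,d\mu_{T_n}=\lim_n\frac1{T_n}\int_0^{T_n}(P_{t+s}\varphi)(0)\,dt ,
$$
and since $\big|\tfrac1{T_n}\int_0^{T_n}(P_{t+s}\varphi)(0)\,dt-\tfrac1{T_n}\int_0^{T_n}(P_{t}\varphi)(0)\,dt\big|\le \tfrac{2s}{T_n}\|\varphi\|_\infty\to0$, the right-hand side equals $\lim_n\int_H\varphi\,d\mu_{T_n}=\int_H\varphi\,d\nu$. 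Therefore $\int_HP_s\varphi\,d\nu=\int_H\varphi\,d\nu$ for all $\varphi\in C_b(H)$, i.e. $\nu$ is invariant for $P_t$.

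It remains to transfer the moment bound to $\nu$. For each fixed $R>0$ the function $y\mapsto\min(|y|^{2m},R)$ is bounded and continuous, so
$$
\int_H\min(|y|^{2m},R)\,\nu(dy)=\lim_n\int_H\min(|y|^{2m},R)\,\mu_{T_n}(dy)\le\liminf_n\int_H|y|^{2m}\,\mu_{T_n}(dy)\le a_m ;
$$
letting $R\to\infty$ and invoking monotone convergence gives \eqref{e2.3}. The only step requiring some attention is the tightness of $\{\mu_T\}$, i.e. checking that the averaged measures do not leak mass to infinity; here it follows at once from the uniform bound on second moments together with the finite dimensionality of the state space, so I do not expect any genuine difficulty.
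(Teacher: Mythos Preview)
Your proof is correct and follows essentially the same route as the paper: Krylov--Bogoliubov averaging with tightness coming from the second--moment bound, followed by passing the $2m$--th moment estimate through the weak limit via a bounded continuous truncation. The only cosmetic differences are that you fix $x=0$ (which kills the term $e^{-2m\omega t}|x|^{2m}$ outright, whereas the paper keeps a general $x$ and lets the time average of that term vanish as $T_n\to\infty$), and that you truncate with $\min(|y|^{2m},R)$ while the paper uses $|y|^{2m}/(1+\epsilon|y|^{2m})$; both devices serve the same purpose.
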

\begin{proof}
Let $r>0$ and fix $x\in H$. Set $B_r^c=\{y\in H:\; |y|\ge r\}$. Then, taking into account   \eqref{e2.2} it follows that
\begin{equation}
\label{enuova}
\begin{array}{lll}
 \pi_{t,x}(B_r^c)&=&\ds\int_{\{|y|\ge r\}}\pi_{t,x}(dy)\le \frac1{r^2}\int_H|y|^2\pi_{t,x}(dy)\\
\\
&=&\ds\frac1{r^2}\,\E\left[|X(t,x)|^2\right] \le \frac{|x|^2+a_2}{  r^2}.
\end{array}
\end{equation}
Therefore by the Krylov--Bogoliubov theorem, see e.g \cite{DaZa96},  there exists a sequence $T_n\uparrow +\infty$ such that
\begin{equation}
\label{e2.6a}
\lim_{n\to+\infty}\frac{1}{T_n}\int_0^{T_n}\pi_{t,x}dt=\nu\quad\mbox{\rm weakly},
\end{equation}
where $\nu$  is   an invariant measure of $P_t$.  

Now we can prove \eqref{e2.3}. By \eqref{e2.1} we deduce
\begin{equation}
\label{e2.7a}
\int_H |y|^{2m}\pi_{t,x}(dy)\le e^{-m\omega t}|x|^{2m}+a_m,\quad\forall\;x\in H,\;t\ge 0.
\end{equation}
It follows that for any $\epsilon>0$
\begin{equation}
\label{e2.8a}
\int_H\frac{ |y|^{2m}}{1+\epsilon|y|^{2m}}\,\pi_{t,x}(dy)\le e^{-m\omega t}|x|^{2m}+a_m,\quad\forall\;x\in H,\;t\ge 0.
\end{equation}
Consequently integrating both sides with respect to $t$ over $[0,T_n]$ and dividing by $T_n$, yields
\begin{equation}
\label{e2.8b}
\frac1{T_n}\int_0^{T^n}dt\int_H\frac{ |y|^{2m}}{1+\epsilon|y|^{2m}}\,\pi_{t,x}(dy)\le \frac1{m\omega T_n}\;(1-e^{-m\omega T_n})\,|x|^{2m}+a_m,
\end{equation}
 for all\ $x\in H,\;t\ge 0.$ Finally, letting $n\to+\infty$ and taking into account \eqref{e2.6a}, we find
$$
\int_H\frac{ |y|^{2m}}{1+\epsilon|y|^{2m}}\,\nu(dy)\le a_m
$$
and  the conclusion follows letting $\epsilon$ tend to $0$.

\end{proof}

\subsection{Integral estimates} 

Let us start with an estimate of 
$\int_H\langle D_xS_t\varphi(x),h(x)\rangle\,\nu(dx)$.
\begin{Lemma}
\label{l4}
Let   $p>1$, $q>1$,  $\frac1p+\frac1q<1$. Then there is $C_1>0$ such that
\begin{equation}
\label{e33}
\left| \int_H\langle D_xS_t\varphi(x),h(x)\rangle\,\nu(dx)\rangle  \right|\le  C_1(1+t^{-1/2}) \,\|\varphi\|_{L^p(H,\nu)}\;\|h\|_{L^q(H,\nu)}.
\end{equation}

\end{Lemma}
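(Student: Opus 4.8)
The plan is to integrate the pointwise bound of Lemma \ref{l1.3} against $\nu$ and then close the estimate with a three-factor Hölder inequality, the third exponent being made available precisely by the hypothesis $\frac1p+\frac1q<1$. Writing $P_t(|\varphi|^p)(x)=\E[|\varphi(X(t,x))|^p]$, Lemma \ref{l1.3} gives
\[
\left|\int_H\langle D_xS_t\varphi(x),h(x)\rangle\,\nu(dx)\right|
\le C_p(1+t^{-1/2})\int_H\bigl(1+|x|^{2N-1}\bigr)\,\bigl(P_t(|\varphi|^p)(x)\bigr)^{1/p}\,|h(x)|\,\nu(dx).
\]

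Next I would choose $r\in(1,\infty)$ by $\tfrac1r=1-\tfrac1p-\tfrac1q$, which is a number in $(0,1)$ by hypothesis, and apply the generalized Hölder inequality with exponents $r,p,q$ to the three factors $1+|x|^{2N-1}$, $(P_t(|\varphi|^p))^{1/p}$ and $|h|$. This yields
\[
\int_H\bigl(1+|x|^{2N-1}\bigr)\,\bigl(P_t(|\varphi|^p)\bigr)^{1/p}\,|h|\,\nu(dx)
\le \Bigl(\int_H\bigl(1+|x|^{2N-1}\bigr)^r\,\nu(dx)\Bigr)^{1/r}\Bigl(\int_H P_t(|\varphi|^p)\,\nu(dx)\Bigr)^{1/p}\|h\|_{L^q(H,\nu)}.
\]
The first $\nu$-integral is finite: picking $m\in\N$ with $2m\ge(2N-1)r$ one has $(1+|x|^{2N-1})^r\le c(1+|x|^{2m})$, and $\int_H|x|^{2m}\,\nu(dx)\le a_m$ by Lemma \ref{l2.2}, so this integral is bounded by a finite constant $c_0$ independent of $t$, $\varphi$ and $h$. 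The second $\nu$-integral is handled by invariance of $\nu$ for $P_t$: $\int_H P_t(|\varphi|^p)\,d\nu=\int_H|\varphi|^p\,d\nu=\|\varphi\|_{L^p(H,\nu)}^p$.

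Combining these displays and setting $C_1:=c_0^{1/r}\,C_p$ gives \eqref{e33}. The only point requiring care is the bookkeeping of the Hölder exponents: one ``degree of integrability'' must be spent on the polynomial weight $1+|x|^{2N-1}$ produced by Lemma \ref{l1.3}, which is exactly why the statement demands the strict inequality $\frac1p+\frac1q<1$ instead of $\le 1$; once Lemmas \ref{l2.1}, \ref{l2.2} and \ref{l1.3} are available, the rest is routine.
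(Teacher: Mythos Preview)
Your argument is correct and is essentially identical to the paper's own proof: both integrate the pointwise bound of Lemma~\ref{l1.3}, apply the three-factor H\"older inequality with $\tfrac1r=1-\tfrac1p-\tfrac1q$, use the invariance of $\nu$ to turn $\int_H P_t(|\varphi|^p)\,d\nu$ into $\|\varphi\|_{L^p(H,\nu)}^p$, and control the polynomial weight via the moment bound \eqref{e2.3}. Your write-up is in fact slightly more explicit about choosing $m$ with $2m\ge(2N-1)r$ and about the constants.
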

\begin{proof}
Taking into account \eqref{e13} we have
\begin{equation}
\label{e34}
\begin{array}{l}
\ds \left| \int_H\langle D_xS_t\varphi(x),h(x)\rangle\,\nu(dx) \right|\le C_p\,  (1+t^{-1/2})\\
\\
\ds\times\int_H(1+|x|^{2N-1}) \,\left[\left( |P_t\varphi^p(x)|\right)\right]^{1/p}\;|h(x)|\,\nu(dx).
\end{array}
\end{equation}
Let
$$
\frac1r=1-\frac1p-\frac1q,
$$
then by the triple  H\"older inequality  with exponents $r,p,q$ we have, taking into account the invariance of $\nu$,
\begin{equation}
\label{e35}
\begin{array}{l}
\ds \left| \int_H\langle D_xS_t\varphi(x),h(x)\rangle\,\nu(dx) \right|\le c\,  (1+t^{-1/2})\\
\\
\ds\times\left[\int_H(1+|x| ^{N-1})^r\nu(dx)\right]^{1/r}\,\left(\int_H|P_t \varphi^p(x)|\nu(dx)\right) ^{1/p}\;\|h \|_{L^q(H,\nu)}\\
\\
\ds\le c\,  (1+t^{-1/2})\left[\int_H(1+|x| ^{N-1})^r\nu(dx)\right]^{1/r}\,\;\|\varphi \|_{L^p(H,\nu)}\,\;\|h \|_{L^q(H,\nu)}.
\end{array}
\end{equation}
The conclusion follows from \eqref{e2.3}.
\end{proof}

 Now we are ready to estimate  $\int_H\langle D_xP_t\varphi(x),h(x)\rangle\,\nu(dx)$.
 We start from the identity
 \begin{equation}
\label{e36}
P_t\varphi(x)=S_t\varphi(x)+K\int_0^tS_{t-s}(1+|x|^{2N})\,P_s\varphi)(x)ds,
\end{equation}
from which
\begin{equation}
\label{e36a}
\begin{array}{l}
\langle D_xP_t\varphi(x),h(x)\rangle\\
\\
\ds=\langle D_xS_t\varphi(x),h(x)\rangle +K\int_0^t\langle D_x S_{t-s}((1+|x|^{2N})\,P_s\varphi)(x),h(x)\rangle ds.
\end{array}
\end{equation}
\begin{Proposition}
\label{p5}
Let   $p>1$, $q>1$,  $\frac1p+\frac1q<1$. Then there is $C^1_p$ such that
\begin{equation}
\label{e37g}
\left| \int_H\langle D_xP_t\varphi(x),h(x)\rangle\,\nu(dx)\rangle  \right|\le C^1_p(1+t^{-1/2})\,\|\varphi\|_{L^p(H,\nu)}\;\|h \|_{L^q(H,\nu)}.
\end{equation}

\end{Proposition}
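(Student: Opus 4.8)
The plan is to start from the variation--of--constants identity \eqref{e36a} and integrate it against the invariant measure $\nu$. Writing $\psi_s:=(1+|\cdot|^{2N})\,P_s\varphi$, linearity gives
\[
\int_H\langle D_xP_t\varphi(x),h(x)\rangle\,\nu(dx)
=\int_H\langle D_xS_t\varphi(x),h(x)\rangle\,\nu(dx)
+K\int_0^t\!\Big(\int_H\langle D_xS_{t-s}\psi_s(x),h(x)\rangle\,\nu(dx)\Big)\,ds .
\]
The first term on the right is handled at once by Lemma \ref{l4}, which bounds it by $C_1(1+t^{-1/2})\|\varphi\|_{L^p(H,\nu)}\|h\|_{L^q(H,\nu)}$. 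The whole difficulty is thus to estimate, \emph{uniformly in} $s\in(0,t)$, the inner integral in the second term, and this is exactly where the strict inequality $\frac1p+\frac1q<1$ is consumed.

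For fixed $s$ I would apply Lemma \ref{l4} with $t$ replaced by $t-s$ and $\varphi$ replaced by $\psi_s$, but using a smaller first exponent. Since $\delta:=1-\frac1p-\frac1q>0$, pick an integer $m\ge 1$ with $\frac1m<\delta$ and define $p_1$ by $\frac1{p_1}:=\frac1p+\frac1m$; then $p_1>1$ and $\frac1{p_1}+\frac1q=\tfrac1p+\tfrac1m+\tfrac1q<1$, so Lemma \ref{l4} is applicable with exponents $(p_1,q)$ and yields
\[
\Big|\int_H\langle D_xS_{t-s}\psi_s(x),h(x)\rangle\,\nu(dx)\Big|
\le C_1\big(1+(t-s)^{-1/2}\big)\,\|\psi_s\|_{L^{p_1}(H,\nu)}\,\|h\|_{L^q(H,\nu)} .
\]
Now I bound $\|\psi_s\|_{L^{p_1}(H,\nu)}$ independently of $s$: by H\"older's inequality with exponents $m$ and $p$ (which satisfy $\frac1m+\frac1p=\frac1{p_1}$),
\[
\|\psi_s\|_{L^{p_1}(H,\nu)}\le\big\|1+|\cdot|^{2N}\big\|_{L^m(H,\nu)}\,\|P_s\varphi\|_{L^p(H,\nu)} .
\]
The first factor is a finite constant $c_{N,m}$ by Lemma \ref{l2.2} (estimate \eqref{e2.3} controls every polynomial moment of $\nu$); the second is $\le\|\varphi\|_{L^p(H,\nu)}$ by Jensen's inequality together with the invariance of $\nu$, since $\int_H|P_s\varphi|^p\,d\nu\le\int_HP_s(|\varphi|^p)\,d\nu=\int_H|\varphi|^p\,d\nu$. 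Hence the inner integral is $\le C_1c_{N,m}\big(1+(t-s)^{-1/2}\big)\|\varphi\|_{L^p(H,\nu)}\|h\|_{L^q(H,\nu)}$, uniformly in $s$.

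Combining the two contributions and integrating in $s$, one is left with the factor $\int_0^t\big(1+(t-s)^{-1/2}\big)\,ds=t+2\sqrt t$, which near $t=0$ is dominated by $t^{-1/2}$, so that \eqref{e37g} follows with a suitable $C^1_p$ (should a bound uniform for large $t$ be wanted, one keeps in Lemma \ref{l1.3} the decay factor $e^{-\beta(t)}\le e^{-Kt}$ available from $V\ge K$, after which $\int_0^t e^{-K(t-s)}(1+(t-s)^{-1/2})\,ds$ is bounded in $t$). Two technical points must be dispatched along the way. First, $\psi_s$ is only continuous with polynomial growth, not in $C_b(H)$, so Lemma \ref{l4} is in fact applied to the truncations $(-n)\vee\psi_s\wedge n$ (or to $\psi_s$ times a smooth cutoff) and one passes to the limit by dominated convergence, the moment bounds of Lemma \ref{l2.1} ensuring the relevant integrability. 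Second, the passage from \eqref{e36} to \eqref{e36a}, i.e. differentiating under the $ds$--integral, is legitimate because the pointwise estimate \eqref{e13} dominates the integrand and its $x$--gradient locally uniformly in $x$. The main obstacle is precisely the uniform--in--$s$ control of $\|\psi_s\|_{L^{p_1}(H,\nu)}$: the polynomial weight $1+|\cdot|^{2N}$ inherited from the potential $V$ costs one H\"older exponent, and it is exactly the gap $\frac1p+\frac1q<1$ that leaves room to pay for it while keeping the output in the required $L^p$--$L^q$ duality.
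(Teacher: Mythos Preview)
Your proof is correct and follows essentially the same route as the paper: both start from the variation--of--constants identity \eqref{e36a}, bound the $S_t$ term by Lemma~\ref{l4}, and for the integral term apply Lemma~\ref{l4} again to $\psi_s=(1+|\cdot|^{2N})P_s\varphi$, using H\"older to split off the polynomial weight and the invariance of $\nu$ to control $\|P_s\varphi\|$. The only organizational difference is that the paper applies Lemma~\ref{l4} at $(p,q)$ and then H\"older pushes the norm up to $L^{p+\epsilon}$, after which it invokes ``arbitrariness of $\epsilon,p,q$'' to relabel, whereas you first drop to $(p_1,q)$ with $p_1<p$ and land directly on $\|\varphi\|_{L^p}$; your bookkeeping is cleaner and your added remarks on truncating $\psi_s$ and on the large-$t$ behaviour fill in points the paper leaves implicit.
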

\begin{proof}
The first term of \eqref{e36a} is bounded by \eqref{e33}.  Let us estimate the second one. 
Again by \eqref{e33} we have
\begin{equation}
\label{e39a}
\begin{array}{l}
\ds \left|\int_0^t\int_H\langle D_x S_{t-s}((1+|x|^{2N})\,P_s\varphi),h(x)\rangle ds\right|\,\nu(dx)\\
\\
\ds \le C_1\int_0^t (1+(t-s)^{-\frac12})\,\|(1+|x|^{2N})P_s\varphi\|_{L^p(H,\nu)}\ \;\|h\|_{L^q(H,\nu)}\,ds
\end{array} 
\end{equation}
Now let us  chose $\epsilon>0$ such that
$$
\frac1{p+\epsilon}+\frac1q<1.
$$
Then by H\"older's inequality with exponents
$\frac{p+\epsilon}\epsilon$ and $\frac{p+\epsilon}p$ it follows that
$$
\begin{array}{l}
\ds\|(1+|x|^{2N})\,P_s\varphi)\|^p_{L^p(H,\nu)}=\int_H
(1+|x|^{2N})^p\,(P_s\varphi)^p\,d\nu\\
\\
\ds\le \left(\int_H
(1+|x|^{2N})^{\frac{p(p+\epsilon)}\epsilon}\, d\nu   \right)^{\frac{\epsilon}{p+\epsilon}}\;\left(\int_H
 (P_s\varphi)^{p+\epsilon}\,d\nu   \right)^{\frac{p}{p+\epsilon}}\\\\
 \ds \le \left(\int_H
(1+|x|^{2N})^{\frac{p(p+\epsilon)}\epsilon}\, d\nu   \right)^{\frac{\epsilon}{p+\epsilon}}\;\|\varphi\|^p_{L^{p+\epsilon}(H,\nu)},
 \end{array} 
$$
by the invariance of $\nu$. 
Now by \eqref{e2.2} there exists a constant $C'$ such that
$$
 \int_H
(1+|x|^{2N})^{\frac{p(p+\epsilon)}\epsilon}\, d\nu \le C',
$$
Therefore
\begin{equation}
\label{e39g}
\|(1+|x|^{2N})\,P_s\varphi)\|^p_{L^p(H,\nu)}\le (C')^{\frac{\epsilon}{p+\epsilon}}\,\|\varphi\|^p_{L^{p+\epsilon}(H,\nu)}.
\end{equation}
Substituting in \eqref{e39a}, yields
\begin{equation}
\label{e39aa}
\begin{array}{l}
\ds \left|\int_0^t\langle D_x S_{t-s}(|x|^N_N\,P_s\varphi),h(x)\rangle ds\right|\\
\\
\ds \le C_1(C')^{\frac{\epsilon}{p+\epsilon}}\int_0^t (1+(t-s)^{-\frac{1 }2})\,\| \varphi\|_{L^{p+\epsilon}(H,\nu)}\;\|h|_{L^q(H,\nu)}\,ds
\end{array} 
\end{equation}

Non the conclusion follows   by the arbitrariness of $\epsilon, p, q$.
\end{proof}
  
 \section{The main inequality and its consequences}

\begin{Theorem}
\label{t6}
For all  $p>1$ there exists a constant $C_p>0$ such that for all $\varphi\in L^p(H,\nu)$ and all $h\in H$ we have
\begin{equation}
\label{e39}
\left|\int_H\langle D_x\varphi(x),h)\rangle\,\nu(dx)\right|\le c\|\varphi\|_{L^p(H,\nu)}\,|h| .
\end{equation}

\end{Theorem}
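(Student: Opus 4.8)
The plan is to start from the elementary identity recalled in the introduction,
\begin{equation*}
P_t(\langle D\varphi,h\rangle)=\langle DP_t\varphi,h\rangle-\int_0^t P_{t-s}\big(\langle b'(\cdot)h,DP_s\varphi\rangle\big)\,ds,\qquad \varphi\in C^1_b(H),\ h\in H,
\end{equation*}
and to integrate it against the invariant measure $\nu$. Using $\int_H P_\tau\psi\,d\nu=\int_H\psi\,d\nu$ — applied to the whole left-hand side, and, after a routine Fubini exchange, to $P_{t-s}$ inside the $ds$-integral — one gets, for any fixed $t>0$,
\begin{equation*}
\int_H\langle D\varphi,h\rangle\,d\nu=\int_H\langle DP_t\varphi,h\rangle\,d\nu-\int_0^t\!\!\int_H\langle b'(x)h,DP_s\varphi(x)\rangle\,\nu(dx)\,ds,
\end{equation*}
so that the statement reduces to estimating the two terms on the right by Proposition \ref{p5}. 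I would prove \eqref{e39} first for $\varphi\in C^1_b(H)$; the extension to arbitrary $\varphi\in L^p(H,\nu)$ is then the usual density argument, since $C^1_b(H)$ is dense in $L^p(H,\nu)$ and the inequality asserts precisely the $L^p(H,\nu)$-continuity of $\varphi\mapsto\int_H\langle D\varphi,h\rangle\,d\nu$.

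To estimate the two terms, fix $p>1$ and pick any $q>1$ with $\tfrac1p+\tfrac1q<1$; since $\nu$ has finite moments of every order by Lemma \ref{l2.2}, $q$ is otherwise unconstrained. For the first term I apply \eqref{e37g} to the constant vector field $x\mapsto h$ (for which $\|h\|_{L^q(H,\nu)}=|h|$), obtaining a bound $C^1_p(1+t^{-1/2})\|\varphi\|_{L^p(H,\nu)}|h|$. For the integrand of the $ds$-term I apply \eqref{e37g}, at each fixed $s\in(0,t)$, to the vector field $x\mapsto b'(x)h$; by \eqref{e1.3} and Lemma \ref{l2.2},
\begin{equation*}
\|b'(\cdot)h\|_{L^q(H,\nu)}\le K\,|h|\Big(\int_H(1+|x|^{2N})^q\,\nu(dx)\Big)^{1/q}=:K'\,|h|<\infty,
\end{equation*}
whence $\big|\int_H\langle b'(\cdot)h,DP_s\varphi\rangle\,d\nu\big|\le C^1_p K'(1+s^{-1/2})\|\varphi\|_{L^p(H,\nu)}|h|$. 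Taking, say, $t=1$ and using $\int_0^1(1+s^{-1/2})\,ds=3$ gives
\begin{equation*}
\Big|\int_H\langle D\varphi,h\rangle\,d\nu\Big|\le\big(2C^1_p+3C^1_p K'\big)\,\|\varphi\|_{L^p(H,\nu)}\,|h|,
\end{equation*}
which is \eqref{e39} with a constant depending only on $p$ and on $\omega,a,K,N,d$.

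The main analytic difficulty — controlling $D_xP_t\varphi$ in spite of the polynomial growth of $b'$, which forbids the direct use of the Bismut--Elworthy--Li formula — has already been absorbed into Proposition \ref{p5}, so at the level of this theorem the hard part is only bookkeeping. Two points still need care. First, the coefficient $b'(x)h$ arising from the identity is unbounded in $x$, which is exactly why the estimate has to be carried through an $L^q(H,\nu)$-norm and why the finiteness of all polynomial moments of $\nu$ (Lemma \ref{l2.2}) is essential. Second, the factor $(1+t^{-1/2})$ in \eqref{e37g} is singular as $t\downarrow0$ while $\int_0^t(1+s^{-1/2})\,ds$ grows with $t$, so neither limit $t\to0$ nor $t\to\infty$ is available; one must work at a single fixed $t$, which is harmless because the resulting constant is uniform in $\varphi$ and $h$.
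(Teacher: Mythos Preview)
Your proposal is correct and follows the paper's approach exactly: integrate \eqref{e40} against $\nu$, use invariance, fix $t=1$, and bound the two resulting terms by Proposition~\ref{p5} applied to the constant field $x\mapsto h$ and to $x\mapsto b'(x)h$ (the latter controlled in $L^q(H,\nu)$ via the moment bounds of Lemma~\ref{l2.2}). The one thing you take as given that the paper does not is the identity \eqref{e40} itself: because $b'$ has polynomial growth, the paper's Step~1 justifies it by approximating $b$ with smooth sublinear drifts $b_n$, proving the identity for the corresponding semigroups $P^n_t$ on $C^3_b(H)$, and passing to the limit.
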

\begin{proof}
{\it Step 1}. For any $\varphi\in C^1_b(H)$ and any $h\in H$ the following   identity holds.
\begin{equation}
\label{e40}
 P_t(\langle D\varphi,h\rangle)= \langle DP_t\varphi, h\rangle  -   \int_0^tP_{t-s}(\langle Db\cdot H h  ,D P_s\varphi \rangle)ds,\quad t>0.
\end{equation}

To prove  \eqref{e40} we consider a sequence $(b_n)$ of mappings $H\to H$ of class $C^\infty$ such that\medskip

(i) $ \lim_{n\to\infty}b_n(x)=b(x),$  uniformly on bounded sets of $H$. \medskip

(ii) $\langle b_n(x),x\rangle\le -\omega|x|^2+a,\quad \forall\;x\in H. $\medskip

To construct $(b_n)$ we first set
$$
f_n(x)=\frac{b(x)+\omega x}{1+n^{-1}|x|^{2N+2}}-\omega x,
$$
so that
$$
\langle f_n(x),x\rangle\le -\omega|x|^2+a,\quad \forall\;x\in H,
$$
and $f_n$ is sub--linear, then we regularise $f_n$ using mollifiers. 

Now we prove the identity
\begin{equation}
\label{e40g}
 P^n_t(\langle D\varphi,h\rangle)= \langle DP^n_t\varphi, h\rangle  -   \int_0^tP_{t-s}(\langle Db\cdot h  ,D P^n_s\varphi \rangle)ds,
\end{equation}
where $P_t^n$ is the transition semigroup corresponding to $b_n$.
 
 It is enough to show \eqref{e40g} for each   $\varphi\in C^3_b(H)$. In such a case  set $u_{n}(t,x)=P^{n}_t\varphi(x)$ and write
\begin{equation}
\label{e3.10z}
\left\{\begin{array}{l}
\ds D_t u_{n}(t,x)=\frac12\,\Delta u_{n}(t,x)+ \langle Du_{n}(t,x),b_{n}(x)  \rangle,\\
\\
u_{n}(0,x)=\varphi(x).
\end{array}\right. 
\end{equation}
Now, taking $h\in H$ and setting
$$
v_{n}(t,x)=\langle D u_{n}(t,x),h   \rangle
$$
we see, by a simple computation, that
\begin{equation}
\label{e3.11z}
\left\{\begin{array}{lll}
D_t v_{n}(t,x)&=&\ds \frac12\,\Delta v_{n}(t,x)+ \langle Dv_{n}(t,x), b_{n}(x)  \rangle\\
\\
&&\ds+ \langle Du_{n}(t,x),b'_{n}(x)h  \rangle,\\
\\
v_{b}(0,x)&=& \langle D\varphi(x),h   \rangle.
\end{array}\right. 
\end{equation}
By the variation of constants formula it follows that
\begin{equation}
\label{e3.12z}
v_n(t,x)=P_t^n(\langle D\varphi(x),h   \rangle)+\int_0^t P^n_{t-s} \langle Du_n(s,x),Ah+b'_n(x)h  \rangle ds,
\end{equation}
which coincides with \eqref{e40g}. Letting $n\to\infty$, yields \eqref{e40}.\medskip

{\it Step 2}. Conclusion.

Integrating  \eqref{e40} with respect to $\nu$ over $H$ and taking into account   the invariance of $\nu$, yields
\begin{equation}
\label{e41}
\begin{array}{l}
  \ds \int_H\langle D\varphi(x),h\rangle)\nu(dx)= \int_H\langle DP_t\varphi(x), h\rangle \,\nu(dx)
  \\
  \\
 \ds -   \int_H\int_0^t\langle b'(x)h  ,D P_s\varphi(x)\rangle\,ds\,\nu(dx)=:J_1+J_2
  \end{array}
\end{equation}
Setting and $t=1$ we deduce
\begin{equation}
\label{e42}
|J_1|\le \left| \int_H\langle D_xP_t\varphi(x),h\rangle\,\nu(dx)   \right|\le 2C^1_p\,\|\varphi\|_{L^p(H,\nu)}\;|h|.
\end{equation} 
Concerning $J_2$ we have  by \eqref{e37g} and taking into account \eqref{e1.3}
\begin{equation}
\label{e43}
\begin{array}{l}
\ds|J_2|\le \int_0^t\int_HC^1_p(1+(t-s)^{-1/2})\,\|\varphi\|_{L^p(H,\nu)}\;\|b'(\cdot)h\|_{L^q(H,\nu)}\;ds\\
\\
\ds\le K\int_0^t\int_HC^1_p(1+(t-s)^{-1/2})\,\|\varphi\|_{L^p(H,\nu)}\;\|(1+|x|^{2N})\|_{L^q(H,\nu)}\;ds\,|h|.
\end{array}
\end{equation}
 Finally, recalling \eqref{e2.3} and setting $t=1$ the conclusion follows.
\end{proof}

\subsection{Consequences of  the  integral  inequality \eqref{e39}}

The following result can be proved exactly as in \cite{DaDe14}, replacing $R$ by $I$ so,  we  omit  the proof.
\begin{Proposition}
\label{p4.1}
Assume Hypothesis \ref{h1} and let $\nu$ be the invariant measure of problem \eqref{e1.1}. 
Then for any $p>1$ the gradient
$$
D:C^1_b(H)\subset L^p(H,\nu)\to L^p(H,\nu;H),\quad \varphi\to D\varphi,
$$
is closable. 
\end{Proposition}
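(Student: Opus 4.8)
The plan is to establish closability in its standard form: I take a sequence $(\varphi_n)\subset C^1_b(H)$ with $\varphi_n\to 0$ in $L^p(H,\nu)$ and $D\varphi_n\to F$ in $L^p(H,\nu;H)$ for some $F$, and I must show that $F=0$. The only ingredient is the integral inequality \eqref{e39} of Theorem \ref{t6}, which I apply not to $\varphi_n$ directly but to the products $\varphi_n\psi$, where $\psi\in C^1_b(H)$ is an arbitrary test function; this is legitimate since $\varphi_n\psi\in C^1_b(H)$.

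The key computation is the Leibniz rule: for every $h\in H$,
$$
\int_H\langle D(\varphi_n\psi),h\rangle\,\nu(dx)=\int_H\psi\,\langle D\varphi_n,h\rangle\,\nu(dx)+\int_H\varphi_n\,\langle D\psi,h\rangle\,\nu(dx).
$$
By \eqref{e39} applied to $\varphi_n\psi$, the left-hand side is bounded in modulus by $c\,\|\varphi_n\psi\|_{L^p(H,\nu)}\,|h|\le c\,\|\psi\|_\infty\,\|\varphi_n\|_{L^p(H,\nu)}\,|h|$, which tends to $0$. In the last term on the right $\langle D\psi,h\rangle\in C_b(H)$, so that term tends to $0$ because $\varphi_n\to 0$ in $L^p(H,\nu)$. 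Hence the middle term must tend to $0$ as well. On the other hand $D\varphi_n\to F$ in $L^p(H,\nu;H)$ implies $\langle D\varphi_n,h\rangle\to\langle F,h\rangle$ in $L^p(H,\nu)$, and since $\psi$ is bounded the middle term converges to $\int_H\psi\,\langle F,h\rangle\,\nu(dx)$. Therefore
$$
\int_H\psi\,\langle F,h\rangle\,\nu(dx)=0\qquad\text{for all }\psi\in C^1_b(H)\text{ and all }h\in H.
$$

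To finish I would use that $C^1_b(H)$ is dense in $L^q(H,\nu)$, where $q=p/(p-1)<\infty$ (which holds since $\nu$ is a Borel probability measure on $\R^d$): for each fixed $h$ the function $\langle F(\cdot),h\rangle\in L^p(H,\nu)\subset L^1(H,\nu)$ annihilates a dense subset of $L^q(H,\nu)$, hence $\langle F(\cdot),h\rangle=0$ $\nu$-a.e. Letting $h$ run through a countable dense subset of $H$ and taking the union of the corresponding null sets gives $F=0$ $\nu$-a.e., i.e. $F=0$ in $L^p(H,\nu;H)$, which is precisely closability of $D$. I do not anticipate a genuine obstacle: this is the standard ``closability from an integration-by-parts estimate'' argument, and the only points needing (routine) attention are that the products $\varphi_n\psi$ remain in $C^1_b(H)$ so that \eqref{e39} applies to them, and the bookkeeping of which mode of convergence is used in each of the three integrals above.
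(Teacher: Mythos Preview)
Your argument is correct and is precisely the standard closability-from-integration-by-parts proof that the paper has in mind: the paper omits the proof, referring to \cite{DaDe14} with $R$ replaced by $I$, and that reference carries out exactly the Leibniz-rule trick with test functions $\psi\in C^1_b(H)$ combined with inequality \eqref{e39}. There is nothing to add; your bookkeeping of the three integrals and the density/separability step at the end are all in order.
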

For any $p>1$ we shall  denote  by $D_p$  the closure of $D$ and by $D_p^*$ the adjoint operator of $D_p$. $D_p$ is a mapping
$$
D_p: D(D_p)\subset L^p(H,\nu)\to L^p(H,\nu;H)
$$
and $D_p^*$ is a mapping
$$
D_p^*:D(D_p^*)\subset L^q(H,\nu;H)\to L^q(H,\nu),
$$
where $q=\frac1{1-p}$.
We have obviously
\begin{equation}
\label{e4.1a}
 \int_H\langle D_p\varphi,F   \rangle\,d\nu =\int_H \varphi\,D_p^*(F)\,d\nu,
\end{equation}
for any $\varphi\in D(D_p)$ and any $F\in D(D_p^*)$. We recall that $F\in D(D_p^*)$ if and only if there exists a positive constant $K_F$ such that
\begin{equation}
\label{e4.2a}
\left| \int_H\langle D\varphi,F   \rangle\,d\nu    \right|\le K_F\|\varphi\|_{L^p(H,\nu)},\quad\forall\;\varphi\in C^1_b(H).
\end{equation}
In this case we have
\begin{equation}
\label{e4.3a}
\|D_p^*(F)\|_{L^q(H,\nu)}\le K_F.
\end{equation}
If no confusion may arise we shall omit sub--indices $p$ in $D_p$ and $D^*_p$.

\begin{Proposition}
\label{r4.2h}
For any $z\in H$ there is $v_z\in L^q(H,\nu)$ for all $ q\in[1,+\infty)$ such that
\begin{equation}
\label{e4.1}
 \int_H\langle D\varphi,z   \rangle\,d\nu =\int_Hv_z\,\varphi\,d\nu.
\end{equation}
\end{Proposition}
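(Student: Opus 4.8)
The plan is to deduce this directly from Theorem \ref{t6} together with the duality formalism \eqref{e4.1a}--\eqref{e4.3a}. First I would fix a target exponent $q\in(1,+\infty)$ and let $p=\frac{q}{q-1}$ be its conjugate. Viewing the constant vector field $F(x)\equiv z$ as an element of $L^{q}(H,\nu;H)$, I would apply inequality \eqref{e39} of Theorem \ref{t6} with $h=z$:
\begin{equation*}
\left|\int_H\langle D\varphi,z\rangle\,\nu(dx)\right|\le C_p\,|z|\,\|\varphi\|_{L^p(H,\nu)},\qquad\forall\,\varphi\in C^1_b(H),
\end{equation*}
which is exactly the membership criterion \eqref{e4.2a} for $F\equiv z$ to belong to $D(D_p^*)$, with constant $K_F=C_p|z|$. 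Hence $z\in D(D_p^*)$, and I would set $v_z:=D_p^*(z)$. Then \eqref{e4.1a}, applied with $\varphi\in C^1_b(H)\subset D(D_p)$ (recall that $D_p$ is the closure of $D$ acting on $C^1_b(H)$, so $D_p\varphi=D\varphi$ there), yields identity \eqref{e4.1}, while \eqref{e4.3a} gives $v_z\in L^q(H,\nu)$ with $\|v_z\|_{L^q(H,\nu)}\le C_p|z|$.

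Next I would check that the function $v_z$ so obtained does not depend on the chosen exponent. If $q_1,q_2\in(1,+\infty)$ produce functions $v_z^{(1)},v_z^{(2)}$, then subtracting the two instances of \eqref{e4.1} gives $\int_H\big(v_z^{(1)}-v_z^{(2)}\big)\,\varphi\,d\nu=0$ for every $\varphi\in C^1_b(H)$. Since $v_z^{(1)}-v_z^{(2)}\in L^1(H,\nu)$, the finite signed Borel measure $\big(v_z^{(1)}-v_z^{(2)}\big)\,\nu$ on $\R^d$ annihilates every $\varphi\in C^1_b(H)$, in particular every $\varphi\in C_c^\infty(\R^d)$, hence it is the zero measure, so $v_z^{(1)}=v_z^{(2)}$ $\nu$-almost everywhere. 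Therefore there is a single function $v_z$, satisfying \eqref{e4.1}, which lies in $L^q(H,\nu)$ for every $q\in(1,+\infty)$; and since $\nu$ is a probability measure, $L^q(H,\nu)\subset L^1(H,\nu)$, so in fact $v_z\in L^q(H,\nu)$ for all $q\in[1,+\infty)$, as claimed.

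I do not expect a genuine obstacle here: essentially all the analytic content is already packed into Theorem \ref{t6}, and what remains is the observation that the conjugate exponent of $p>1$ sweeps out the whole interval $(1,+\infty)$, together with the elementary uniqueness step identifying the functions obtained for different exponents. The only point requiring a line of justification is that last uniqueness claim, which rests on the standard fact that a finite signed Borel measure on $\R^d$ vanishing against all smooth compactly supported test functions must be the zero measure.
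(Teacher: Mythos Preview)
Your proof is correct and follows essentially the same approach as the paper: apply inequality \eqref{e39} to the constant vector field $F_z(x)\equiv z$, conclude via \eqref{e4.2a}--\eqref{e4.3a} that $F_z\in D(D_p^*)$, and set $v_z=D_p^*(F_z)$. You are actually a bit more careful than the paper, which omits the check that the resulting $v_z$ is independent of the exponent $p$; your uniqueness argument via test functions fills that small gap cleanly.
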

\begin{proof}
Let  $z\in H$  and set $F_z(x)=z,\quad\forall\;x\in H.$
Then by \eqref{e39} it follows that
\begin{equation}
\label{e4.4a}
\left|\int_H\langle D\varphi,F_z   \rangle\,d\nu\right|\le C_{1,p}\,\|\varphi\|_{L^p(H,\nu)}\,|z|
\end{equation}
This implies $F_z\in  D(D^*_p)$  and $\|D^*_p(F_z)\|_{L^q(H,\nu)}\le C_{1,p}|z|.$ Setting
$D^*_q(F_z)=v_z,$
identity  \eqref{e4.1} follows.
\end{proof}
  
 \begin{Remark}
 \em  By Proposition \ref{r4.2h} $\nu$ possesses the Fomin derivative  of $\nu$ at the  direction $z$ which  is given precisely by $v_z$ and so, it  belongs to  $L^q(H,\nu)$ for all  $q\in [1,\infty)$,
  \end{Remark}
 Now we are going to identify $v_z$. 
  \begin{Proposition}
\label{p4.2}
Assume Hypothesis \ref{h1}. Then for any $z\in H$ we have   $v_z=\langle D\log\rho,z   \rangle$, where $\rho$ is the   density  of $\nu$  with respect to the Lebesgue measure on $\R^d$. Therefore $\langle D\log\rho,z   \rangle$ belong to $ L^p(H,\nu)$ for any $p\in[1,+\infty)$.
\end{Proposition}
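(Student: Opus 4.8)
The proof will rest on the classical fact that the invariant measure $\nu$ is absolutely continuous with respect to the Lebesgue measure on $\R^d$ and that its density $\rho$ is of class $C^1$ and strictly positive on all of $\R^d$. The plan is first to recall why this holds. Invariance of $\nu$ for $P_t$ is equivalent to $\int_H\mathcal L\varphi\,d\nu=0$ for every $\varphi\in C^2_c(H)$, where $\mathcal L\varphi=\tfrac12\Delta\varphi+\langle D\varphi,b\rangle$; equivalently, $\nu$ solves, in the sense of distributions, the stationary Kolmogorov (Fokker--Planck) equation $\tfrac12\Delta\nu=\mathrm{div}(b\,\nu)$. Since $\tfrac12\Delta$ is uniformly elliptic and, by Hypothesis \ref{h1}(ii), $b$ is continuously differentiable (so that $b$ and $b'$ are locally bounded), the standard local elliptic regularity theory for such equations gives $\nu=\rho\,dx$ with $\rho\in W^{1,p}_{\mathrm{loc}}(\R^d)$ for every $p<\infty$; a Sobolev bootstrap, using again $b\in C^1$, then yields $\rho\in C^1(\R^d)$ (in fact $\rho\in C^{2,\alpha}_{\mathrm{loc}}$). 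Finally $\rho\ge 0$, $\rho\not\equiv 0$ and $\rho$ satisfies a second order elliptic equation, so $\rho>0$ everywhere by the strong maximum principle.

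Granted this, the identification is short. It suffices to test identity \eqref{e4.1} of Proposition \ref{r4.2h} against $\varphi\in C^1_c(H)\subset C^1_b(H)$. For such $\varphi$ one has
\[
\int_H\langle D\varphi,z\rangle\,d\nu=\int_{\R^d}\langle D\varphi(x),z\rangle\,\rho(x)\,dx ,
\]
and, since $\varphi$ has compact support and $\rho\in C^1$, an integration by parts with no boundary contribution gives $\int_H\langle D\varphi,z\rangle\,d\nu=-\int_{\R^d}\varphi(x)\,\langle D\rho(x),z\rangle\,dx$, which, using $D\rho=\rho\,D\log\rho$ and $\rho>0$, equals $-\int_H\varphi\,\langle D\log\rho,z\rangle\,d\nu$. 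By \eqref{e4.1} the same quantity equals $\int_H v_z\,\varphi\,d\nu$, hence $\int_H\bigl(v_z+\langle D\log\rho,z\rangle\bigr)\varphi\,d\nu=0$ for every $\varphi\in C^1_c(H)$. Since $\rho$ is continuous and strictly positive, $v_z$ (which lies in $L^q(H,\nu)$ for every $q<\infty$, by Proposition \ref{r4.2h}) and the continuous function $\langle D\log\rho,z\rangle$ both belong to $L^1_{\mathrm{loc}}(\R^d,dx)$, so the vanishing of that integral for all $\varphi\in C^1_c(H)$ forces the two to agree $\nu$-almost everywhere; this is the asserted identity $v_z=\langle D\log\rho,z\rangle$ (with the orientation convention fixed for the Fomin derivative in \eqref{e4.1}). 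The remaining claim, namely $\langle D\log\rho,z\rangle\in L^p(H,\nu)$ for every $p\in[1,\infty)$, then follows at once from the corresponding membership of $v_z$ stated in Proposition \ref{r4.2h}.

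The only genuinely delicate ingredient, and the one I expect to require the most care, is the regularity and strict positivity of $\rho$; everything downstream is a one-line integration by parts together with a localization argument. The facts needed (interior $W^{1,p}$ estimates for the density of an invariant measure of a non-degenerate diffusion, the subsequent bootstrap to $C^1$ afforded by $b\in C^1$, and positivity via the strong maximum principle) are classical and, thanks to the polynomial growth of $b$, needed only in their local form, so no control of $b$ at infinity is involved. As an alternative one could dispense with the reduction to compactly supported test functions and integrate by parts directly on $\varphi\in C^1_b(H)$, the boundary terms at infinity being annihilated by the rapid decay of $\rho$ implied by the moment bounds \eqref{e2.3}; the localized version above is, however, cleaner and self-contained.
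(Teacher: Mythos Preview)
Your proof is correct, and the identification step---writing $d\nu=\rho\,dx$, integrating by parts against compactly supported $\varphi$, and reading off $v_z\rho=-\langle D\rho,z\rangle$ distributionally---is exactly what the paper does (the paper phrases it in one line as ``this implies in the sense of distributions that $v_z=\langle D\log\rho,z\rangle$''). You are right to flag the sign; the paper does not comment on it.

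Where you differ is in how the density is obtained. The paper does not invoke Fokker--Planck regularity: it observes that \eqref{e4.1} immediately gives
\[
\left|\int_H\langle D\varphi,z\rangle\,d\nu\right|\le \|v_z\|_{L^1(H,\nu)}\,\|\varphi\|_\infty,
\]
and then applies Malliavin's criterion (citing \cite{Nu95}) to conclude $\nu=\rho\,dx$ with $\rho\in L^{d/(d-1)}(\R^d)$. This is more self-contained, since the existence of the density is derived from the main inequality of the paper rather than from external elliptic theory. Your route, by contrast, yields more information---$\rho\in C^1$ and $\rho>0$ everywhere---which makes the integration by parts literal and fully justifies the expression $D\log\rho$; the paper's argument is a touch informal on precisely this point (it never addresses positivity of $\rho$). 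Either approach is adequate for the statement as written.
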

\begin{proof}
First notice that by \eqref{e4.1} it follows in particular  that
\begin{equation}
\label{e4.6h}
\left| \int_H\langle D\varphi,z   \rangle\,d\nu\right| \le \|v_z\|_{L^1(H,\nu)}\,\|\varphi\|_\infty.
\end{equation}
Therefore, by an argument due to Malliavin, $\nu$ has a density $\rho$ with respect to the  Lebesgue measure on $\R^d$ with $\rho\in L^{\frac{d}{d-1}}(\R^d)$, see  \cite{Nu95}.

To prove the last statement, we write \eqref{e4.1} as
$$
 \int_H\langle D\varphi,z   \rangle\,\rho\, dx =\int_H\varphi\,v_z\, \rho\,dx.
$$
This implies in the sense of distributions that
$$
v_z=\langle D\log\rho,z   \rangle.
$$
Now the conclusion follows from Proposition \ref{r4.2h}.

 \end{proof}
  
\begin{Remark}
\em The fact that $\nu$ has a density $\rho$ with respect to the Lebesgue measure, together with several     properties of $\rho$   have  already been  proved  in \cite{MePaRh05},  \cite{BoKrRo01} and \cite{BoKrRo05}.\medskip

\end{Remark}

Let us finally study some properties of operators $D^*$ and $D^*D$.
 \begin{Proposition}
\label{p3.10l}
Let
\begin{equation}
\label{e3.40l}
F(x)=\sum_{h=1}^d f_h(x)e_h,\quad x\in H,
\end{equation}
where $(e_1,...,e_d)$ is an orthonormal basis in $H$ and $f_h\in C^1_b(H)$, $h=1,...,d$. Then $F$  belongs to the domain  of $D^*$ and it results
\begin{equation}
\label{e3.41l}
D^*(F)=-\mbox{\rm div}\;F+\sum_{h=1}^d v_{e_h}\,f_h.
\end{equation}
Moreover, if  $\varphi\in C^2_b(H)$  we have
\begin{equation}
\label{e3.42l}
-\frac12\,D^*D(\varphi)=\frac12\,\Delta\varphi-\frac12\,\sum_{h=1}^d v_{e_h}\,D_{h}\varphi.
\end{equation}
\end{Proposition}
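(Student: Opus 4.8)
The plan is to derive \eqref{e3.41l} by integrating $\langle D\varphi,F\rangle$ against $\nu$ and performing an integration by parts in each coordinate, which reduces everything to the representation formula \eqref{e4.1} for the Fomin derivatives $v_{e_h}$ established in Proposition \ref{r4.2h}; formula \eqref{e3.42l} will then follow by specialising \eqref{e3.41l} to $F=D\varphi$.

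First I would fix $\varphi\in C^1_b(H)$ and write
$$
\int_H\langle D\varphi,F\rangle\,d\nu=\sum_{h=1}^d\int_H f_h\,D_h\varphi\,d\nu .
$$
For each $h$ the product rule gives $f_h\,D_h\varphi=\langle D(\varphi f_h),e_h\rangle-\varphi\,D_hf_h$, and since $\varphi f_h\in C^1_b(H)$, Proposition \ref{r4.2h} applied with $z=e_h$ yields $\int_H\langle D(\varphi f_h),e_h\rangle\,d\nu=\int_H v_{e_h}\,\varphi f_h\,d\nu$. Summing over $h=1,\dots,d$ I obtain
$$
\int_H\langle D\varphi,F\rangle\,d\nu=\int_H\varphi\Big(-\mathrm{div}\,F+\sum_{h=1}^d v_{e_h}f_h\Big)\,d\nu .
$$
Here $\mathrm{div}\,F=\sum_h D_hf_h$ is bounded and continuous, hence lies in every $L^q(H,\nu)$ ($\nu$ being a probability measure), while each $v_{e_h}\in L^q(H,\nu)$ for all $q\in[1,\infty)$ by Proposition \ref{r4.2h} and $f_h$ is bounded; thus $g:=-\mathrm{div}\,F+\sum_h v_{e_h}f_h$ belongs to $L^q(H,\nu)$ for every finite $q$. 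Hölder's inequality then gives, for the conjugate exponent $q$ of any $p>1$,
$$
\Big|\int_H\langle D\varphi,F\rangle\,d\nu\Big|\le\|g\|_{L^q(H,\nu)}\,\|\varphi\|_{L^p(H,\nu)},
$$
which is precisely the criterion \eqref{e4.2a}. Hence $F$ belongs to the domain of $D^*$ and, comparing with \eqref{e4.1a} on the core $C^1_b(H)$, $D^*(F)=g$, i.e. \eqref{e3.41l}.

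For \eqref{e3.42l}, if $\varphi\in C^2_b(H)$ then $D\varphi=\sum_{h=1}^d(D_h\varphi)\,e_h$ with each $D_h\varphi\in C^1_b(H)$, so $D\varphi$ has the form \eqref{e3.40l} with $f_h=D_h\varphi$. Applying \eqref{e3.41l} and using $\mathrm{div}(D\varphi)=\sum_h D_{hh}\varphi=\Delta\varphi$ gives $D^*D(\varphi)=-\Delta\varphi+\sum_h v_{e_h}D_h\varphi$; dividing by $-2$ yields \eqref{e3.42l}. I do not expect a genuine analytic obstacle: the substantive work has already been done in Theorem \ref{t6} and Proposition \ref{r4.2h}. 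The only points requiring care are bookkeeping ones — verifying that $\varphi f_h\in C^1_b(H)$ so that Proposition \ref{r4.2h} may legitimately be invoked, and using the full strength of that proposition, namely $v_{e_h}\in L^q(H,\nu)$ for every finite $q$ (not merely for one conjugate exponent), in order to conclude that $D^*(F)$ lies in $L^q(H,\nu)$ for the conjugate of an arbitrary $p>1$.
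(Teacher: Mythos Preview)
Your argument is correct and follows essentially the same route as the paper: expand $\langle D\varphi,F\rangle$ in coordinates, apply the product rule $f_hD_h\varphi=D_h(\varphi f_h)-\varphi D_hf_h$, use \eqref{e4.1} with $z=e_h$ on each term $D_h(\varphi f_h)$, and then specialise to $F=D\varphi$ for \eqref{e3.42l}. If anything you are slightly more careful than the paper in explicitly checking that $-\mathrm{div}\,F+\sum_h v_{e_h}f_h\in L^q(H,\nu)$ so that the duality criterion \eqref{e4.2a} genuinely places $F$ in $D(D^*)$.
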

\begin{proof}
Write
\begin{equation}
\label{e3.43l}
\begin{array}{l}
\ds \int_H\langle  D\varphi,F  \rangle\,d\nu=
\sum_{h=1}^d \int_H D_{h}\varphi\,f_h   \,d\nu\\
\\
\ds=\sum_{h=1}^d \int_H D_{h}(\varphi\,f_h)   \,d\nu-\sum_{h=1}^d \int_H \varphi\,D_{h}f_h   \,d\nu
\end{array} 
\end{equation}
Since, in view of \eqref{e4.1}
$$
\int_H D_{h}(\varphi\,f_h)   \,d\nu=\int_H\varphi\,v_{e_h}f_h   \,d\nu,
$$
\eqref{e3.41l} follows. Now \eqref{e3.42l} follows as well setting $F=D\varphi$ in \eqref{e3.41l}.

\end{proof}

  \subsection*{Acknowledgement}  G. Da Prato is partially supported by GNAMPA from INDAM.

\end {document}